\numberwithin{equation}{section}
\numberwithin{figure}{section}
\theoremstyle{plain}
\newtheorem*{cor*}{\protect\corollaryname}
\theoremstyle{plain}
\newtheorem{thm}{\protect\theoremname}[section]
\theoremstyle{definition}
\newtheorem{defn}[thm]{\protect\definitionname}
\theoremstyle{question}
\theoremstyle{remark}
\theoremstyle{plain}
\newtheorem{prop}[thm]{\protect\propositionname}
\theoremstyle{plain}
\newtheorem{lem}[thm]{\protect\lemmaname}
\theoremstyle{plain}
\newtheorem{cor}[thm]{\protect\corollaryname}
\numberwithin{equation}{section}
\numberwithin{figure}{section}
 \let\footnote=\endnote
\theoremstyle{definition}
\def\s{\sigma}
\def\L{\Lambda}
\def\D{\mathbb{D}}
\def\R{\mathbb{R}}
\def\N{\mathbb{N}}
\def\Z{\mathbb{Z}}
\keywords{}
\subjclass[2000]{}
\def\s{\sigma}
\def\Si{\Sigma_k}
\def\D{\Delta}
\def\R{\mathbb{R}}
\def\loc{\text{loc}}
\def\A{\mathcal{A}}
\def\B{\mathcal{B}}
\def\D{\mathcal{D}}
\def\L{\mathcal{L}}
\def\M{\mathcal{M}}
\def\glr{\text{GL}_d(\R)}
  \providecommand{\corollaryname}{Corollary}
  \providecommand{\definitionname}{Definition}
  \providecommand{\lemmaname}{Lemma}
  \providecommand{\propositionname}{Proposition}
  \providecommand{\remarkname}{Remark}
  \providecommand{\theoremname}{Theorem}
\providecommand{\theoremname}{Theorem}
\definecolor{lime}{HTML}{A6CE39}
\DeclareRobustCommand{\orcidicon}{
	\begin{tikzpicture}
	\draw[lime, fill=lime] (0,0) 
	circle [radius=0.16] 
	node[white] {{\fontfamily{qag}\selectfont \tiny ID}};
	\draw[white, fill=white] (-0.0625,0.095) 
	circle [radius=0.007];
	\end{tikzpicture}
	\hspace{-2mm}
}
\author[Reza Mohammadpour]{Reza Mohammadpour\orcidA{}}
\address{Department of Mathematics, Uppsala University, Box 480, SE-75106, Uppsala, SWEDEN.}
\date{\today}
\subjclass[2010]{ 28A80, 28D20, 37D35, 37H15}
\keywords{Lyapunov exponents,  variational principle, multifractal formalism, typical cocycles}%
\email{reza.mohammadpour@math.uu.se}
\begin{document}
\title[Restricted variational principle  of Lyapunov exponents for typical cocycles]{Restricted variational principle  of Lyapunov exponents for typical cocycles}
\maketitle
\begin{abstract}
In this paper, we study the multifractal formalism of Lyapunov exponents for typical cocycles.  We establish a variational relation between the Legendre transform of topological pressure of the generalized singular value function and measure-theoretic entropies. As a consequence, we show that the restricted variational principle of Lyapunov exponents holds for typical cocycles.
\end{abstract}

\section{Introduction and statement of the results}

Assume that $(A_1, \ldots, A_k) \in \glr^k$ generates a one-step cocycle $\A: \Si \to \glr.$ Let $\A:\Si \to \glr$ be a typical cocycle over a full shift $(\Si, T)$ (see Section \ref{section-prelim} for
the precise definition).
Let $\mu$ be an $T$-invariant measure. By Oseledets theorem, there might exist several Lyapunov exponents. We denote by $\chi_{1}(x, \mathcal{A}) \geq \chi_{2}(x, \mathcal{A})\geq \ldots \geq \chi_{d}(x, \mathcal{A})$ the Lyapunov exponents, counted with multiplicity, of the cocycle $(\mathcal{A}, T)$. Also, we denote $\chi_i(\mu, \A):=\int \chi_i(x, \mathcal{A}) d\mu(x)$ for $i=1, \ldots, d.$ If $\mu$ is an ergodic invariant probability measure, then the limit of  Lyapunov exponents exist for $\mu$-almost all points, but there are plenty of ergodic invariant
measures, for which the limit exists but converges to a different quantity. Furthermore,
there are plenty of points which are not generic points for any ergodic
measure or even for which Lyapunov exponents do not exist. Therefore, one may ask the size of the $\vec{\alpha}$-level set, which defined as follows:  For $\vec{\alpha}:=(\alpha_1, \ldots, \alpha_d) \in \R^{d}$,
\[E(\vec{\alpha})=\bigg\{ x\in \Si: \lim_{n\to \infty} \frac{1}{n}\log \sigma_{i}(\mathcal{A}^{n}(x))= \alpha_i \text{ for }i=1,2, \ldots, d \bigg\},\]
where $\sigma_{1}, \ldots, \sigma_d$ are singular values, listed in decreasing order according to multiplicity. The size is usually calculated in terms either topological entropy or Hausdorff dimension.  We
refer the reader to \cite{Moh22-entropy} for references and full details.


We will also make use of the exterior product cocycle $\A^{\wedge m}$ for $1 \leq m \leq  d-1$ where
$\A^{\wedge m}(x)$ is considered as a linear transformation on $(\R^{d})^{\wedge m}$.

 For $q:=(q_{1}, \cdots,  q_{d})\in \R^d$, we define the \textit{generalized singular value function} $\psi^{q_{1}, \ldots, q_{d}}(\mathcal{A}): \mathbb{R}^{d \times d} \rightarrow[0, \infty)$ as
$$
\psi^{q_{1}, \ldots, q_{d}}(\mathcal{A}):=\sigma_{1}(\mathcal{A})^{q_{1}} \cdots \sigma_{d}(\mathcal{A})^{q_{d}}=\left(\prod_{m=1}^{d-1}\left\|\mathcal{A}^{\wedge m}\right\|^{q_{m}-q_{m+1}}\right)\left\|\mathcal{A}^{\wedge d}\right\|^{q_{d}}.
$$
  For any $q:=(q_{1}, \cdots,  q_{d})\in \R^d$, denote $\psi^{q}(\A):=\psi^{q_{1}, \ldots, q_{d}}(\mathcal{A}).$ Notice that the limit in defining the topological pressure $P \left(\log \psi^{q}(\mathcal{A})\right)$ exists for any $q \in \R^{d}$ when $\mathcal{A}$ is a typical cocycle; see Section \ref{proof-of-the-main-thm}.

 Our main result is the following result:

\begin{thm}\label{main-thm}
Assume that $(A_1, \ldots, A_k)\in \glr^k$ generates a one-step cocycle $\A:\Si \to \glr$. Suppose that $\A:\Si \to \glr$  is a typical cocycle. 
Assume that $\Omega$ is the range of the map from $\mathcal{M}(\Si,T)$ to $\R^{d}$ 
\[ \mu \mapsto (\chi_{1}(\mu, \mathcal{A}), \chi_{2}(\mu, \mathcal{A}),...,\chi_{d}(\mu, \mathcal{A})).\]

 Then,
\[\sup\bigg\{h_{\mu}(T) : \mu \in \mathcal{M}(\Si, T), \chi_{i}(\mu, \mathcal{A})=\alpha_{i} \text{ for }i=1,2, \ldots, d \bigg\}=\inf_{q\in \R^d} \bigg\{P(\log \psi^{q}(\A))-\langle q, \vec{\alpha} \rangle \bigg\},\]
for $\vec{\alpha} \in \text{ri}(\Omega),$ where $\text{ri}(\Omega)$ denotes the relative interior of $\Omega$ (cf. \cite{Rock}).
\end{thm}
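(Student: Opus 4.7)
The plan is to split the identity into its two inequalities and attack each with standard thermodynamic-formalism machinery, with the typical-cocycle hypothesis ensuring that the technical ingredients (sub-additive variational principle, existence and uniqueness of equilibrium states, convex duality) all apply.

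For the direction $\leq$, I would invoke the sub-additive variational principle of Cao--Feng--Huang applied to the sub-additive potential $\Phi_q = \{\log \psi^q(\mathcal{A}^n)\}_{n \geq 1}$. For a typical cocycle, Oseledets' theorem together with Kingman's sub-additive ergodic theorem (and bounded convergence, since the $\log \sigma_i$ are uniformly bounded for a one-step cocycle in $\mathrm{GL}_d(\mathbb{R})$) gives
\[
\lim_n \tfrac{1}{n}\int \log \psi^q(\mathcal{A}^n)\,d\mu \;=\; \sum_{i=1}^d q_i\,\chi_i(\mu,\mathcal{A}) \;=\; \langle q, \vec{\chi}(\mu,\mathcal{A})\rangle
\]
for every $\mu \in \mathcal{M}(\Sigma, T)$. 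The sub-additive variational principle then yields
\[
P(\log \psi^q(\mathcal{A})) \;=\; \sup_{\nu \in \mathcal{M}(\Sigma,T)}\bigl\{h_\nu(T) + \langle q, \vec{\chi}(\nu,\mathcal{A})\rangle\bigr\}.
\]
Restricting to $\mu$ with $\vec{\chi}(\mu,\mathcal{A}) = \vec{\alpha}$ gives $h_\mu(T) \leq P(\log \psi^q(\mathcal{A})) - \langle q, \vec{\alpha}\rangle$; taking the infimum over $q$ and then the supremum over $\mu$ completes the direction.

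For the direction $\geq$, I would use convex duality. The function $q \mapsto P(\log \psi^q(\mathcal{A}))$ is convex on $\mathbb{R}^d$ (being, by the above formula, a supremum of affine functions of $q$), so its Legendre transform is well defined and lower semicontinuous; moreover, its effective domain has closure equal to $\overline{\Omega}$. By a standard Rockafellar argument, for each $\vec{\alpha} \in \mathrm{ri}(\Omega)$ there exists $q^\ast \in \mathbb{R}^d$ with $\vec{\alpha} \in \partial P(q^\ast)$, so the infimum in the theorem is attained at $q^\ast$. Next I would appeal to the existence and uniqueness of equilibrium states for generalized singular-value potentials over typical cocycles (in the spirit of Park and Bochi--Morris, invoked in my earlier work \cite{Moh22-entropy}) to obtain a unique equilibrium state $\mu_{q^\ast}$ with $h_{\mu_{q^\ast}}(T) + \langle q^\ast, \vec{\chi}(\mu_{q^\ast},\mathcal{A})\rangle = P(\log \psi^{q^\ast}(\mathcal{A}))$. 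Uniqueness of this equilibrium state forces $\partial P(q^\ast)$ to be the singleton $\{\vec{\chi}(\mu_{q^\ast},\mathcal{A})\}$, whence $\vec{\chi}(\mu_{q^\ast},\mathcal{A}) = \vec{\alpha}$, and $\mu_{q^\ast}$ witnesses the desired lower bound.

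The main obstacle will be the differentiability of the pressure at the dual point $q^\ast$ above each $\vec{\alpha} \in \mathrm{ri}(\Omega)$: uniqueness of the equilibrium state, which is what equates $\partial P(q^\ast)$ with a single Lyapunov-exponent vector, must be propagated to \emph{all} exterior-power cocycles $\mathcal{A}^{\wedge m}$ simultaneously. This is precisely where typicality of $\mathcal{A}$ is essential, since it yields the pinching and twisting properties on every $\mathcal{A}^{\wedge m}$ that secure quasi-multiplicativity of $\psi^q$ and, through it, uniqueness of the equilibrium state together with a simple Lyapunov spectrum. A secondary technical point is that the infimum may fail to be realized for $\vec{\alpha}$ on the relative boundary of $\Omega$; restricting to $\mathrm{ri}(\Omega)$ sidesteps this, but one still has to verify that every relative-interior point lies in the subdifferential image of $P$, which is the content of the relevant convex-analytic lemma in Rockafellar's framework.
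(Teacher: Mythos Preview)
Your proposal has a real gap at the outset: the sequence $\{\log\psi^q(\mathcal{A}^n)\}_{n\ge 1}$ is \emph{not} sub-additive for general $q\in\mathbb{R}^d$, so the Cao--Feng--Huang variational principle does not apply. Writing $\psi^q(\mathcal{A})=\prod_{i=1}^{d-1}\|\mathcal{A}^{\wedge i}\|^{q_i-q_{i+1}}\cdot\|\mathcal{A}^{\wedge d}\|^{q_d}$, each factor is sub-multiplicative only when $q_i\ge q_{i+1}$ and super-multiplicative otherwise; for non-monotone $q$ the product is neither. The paper flags this explicitly, and the identity
\[
P(\log\psi^q(\mathcal{A}))=\sup_{\nu\in\mathcal{M}(\Sigma_k,T)}\bigl\{h_\nu(T)+\langle q,\vec\chi(\nu,\mathcal{A})\rangle\bigr\}
\]
that you write down as an immediate consequence of Cao--Feng--Huang is in fact the main technical content here (Theorem~\ref{vair-prin-for the generalized sing}). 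The paper proves it by passing to a dominated subsystem $\mathcal{B}$ on $(\mathcal{L}_\ell^{\mathcal{D}})^{\mathbb{Z}}$ where the singular-value potentials become almost additive, establishing the variational principle there, and transferring it back via an Abramov/Wojtkowski-type comparison (Proposition~\ref{relation between entropies and LE}) together with the pressure approximation of Theorem~\ref{continuity_potential}. You do later mention quasi-multiplicativity, and Park's framework for quasi-multiplicative potentials would indeed also deliver this variational principle; but that is a genuinely different argument from Cao--Feng--Huang and must be the thing you invoke. As written, both halves of your argument rest on a variational principle you have not justified.

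Granting the variational principle, your route to the lower bound via equilibrium states and differentiability of pressure is a legitimate alternative to the paper's. The paper instead proceeds by pure Fenchel duality: it sets $g(\vec\alpha)=\sup\{h_\mu(T):\vec\chi(\mu,\mathcal{A})=\vec\alpha\}$, notes that $g$ is concave on $\Omega$, and applies the biconjugate identity (Theorem~\ref{FH-cor}, from Feng--Huang) to recover $g$ on $\mathrm{ri}(\Omega)$ as the Legendre transform of $q\mapsto P(\log\psi^q(\mathcal{A}))$. This avoids any appeal to uniqueness of equilibrium states or smoothness of $P$. Your approach has the advantage of producing an explicit witnessing measure $\mu_{q^\ast}$ at each $\vec\alpha$, but it imports heavier external input (uniqueness, hence G\^ateaux differentiability), and you would still need to check that every $\vec\alpha\in\mathrm{ri}(\Omega)$ actually lies in $\partial P(q^\ast)$ for some \emph{finite} $q^\ast$, which is not automatic from convexity alone.
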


We show that the restricted variational principle of Lyapunov exponents holds for typical cocycles.

\begin{cor}
Assume that $(A_1, \ldots, A_k)\in \glr^k$ generates a one-step cocycle $\A:\Si \to \glr$. Suppose that $\A:\Si \to \glr$  is a typical cocycle. Then
$$\begin{aligned}
h_{\text{top}}(E(\vec{\alpha}))=& \inf_{q \in \R^d}\bigg\{P(\log \psi^{q}(\A))- \langle \vec{\alpha}, q \rangle \bigg\}=\\
&\sup \bigg\{h_{\mu}(T): \mu \in \M(\Si, T), \chi_{i}(\mu, \A)=\alpha_i \text{ for }i=1,2, \ldots, d \bigg\}\\
\end{aligned}$$
for all $\vec{\alpha} \in \text{ri}(\Omega).$
\end{cor}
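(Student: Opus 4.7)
The second equality in the corollary is immediately Theorem \ref{main-thm}, so the only thing left to prove is
$$h_{\text{top}}(E(\vec{\alpha}))=\sup\bigl\{h_{\mu}(T):\mu\in\M(\Si,T),\ \chi_i(\mu,\A)=\alpha_i\bigr\}.$$
My strategy is to sandwich $h_{\text{top}}(E(\vec{\alpha}))$ between this supremum and $\inf_{q}\{P(\log\psi^{q}(\A))-\langle q,\vec{\alpha}\rangle\}$ and then close the gap via Theorem \ref{main-thm}. Concretely I would establish
$$\sup\{h_\mu(T):\chi_i(\mu,\A)=\alpha_i\}\ \leq\ h_{\text{top}}(E(\vec\alpha))\ \leq\ \inf_{q\in\R^d}\bigl\{P(\log\psi^q(\A))-\langle q,\vec\alpha\rangle\bigr\}.$$

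\textbf{Lower bound.} For any ergodic $\mu$ with $\chi_i(\mu,\A)=\alpha_i$, Oseledets' theorem forces $\mu(E(\vec\alpha))=1$, and the standard fact that Bowen's topological entropy of a Borel set of full measure dominates the Kolmogorov--Sinai entropy of the measure gives $h_\mu(T)\leq h_{\text{top}}(E(\vec\alpha))$. For a general (non-ergodic) invariant $\mu$ with the prescribed exponents, I pass to the ergodic decomposition; affinity of both $\mu\mapsto h_\mu(T)$ and $\mu\mapsto\chi_i(\mu,\A)$ together with an approximation by ergodic components lying in the same slice $\{\chi_i=\alpha_i\}$ reduces to the ergodic case and yields the claimed supremum bound.

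\textbf{Upper bound.} Fix $q\in\R^d$ and $\varepsilon>0$. Since $\frac{1}{n}\log\psi^q(\A^n(x))\to\langle q,\vec\alpha\rangle$ pointwise on $E(\vec\alpha)$, the nested decomposition
$$E_N:=\bigl\{x\in E(\vec\alpha):\bigl|\tfrac{1}{n}\log\psi^q(\A^n(x))-\langle q,\vec\alpha\rangle\bigr|<\varepsilon\ \text{for all } n\geq N\bigr\}$$
exhausts $E(\vec\alpha)$, and countable stability of Bowen entropy reduces the task to estimating $h_{\text{top}}(E_N)$. A Carath\'eodory covering argument with Bowen balls weighted by $\psi^q(\A^n)$ then converts the uniform bound on $\log\psi^q$ into $h_{\text{top}}(E_N)\leq P(\log\psi^q(\A))-\langle q,\vec\alpha\rangle+\varepsilon$; letting $\varepsilon\to 0$ and minimising over $q$ yields the right-hand inequality. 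The main obstacle is precisely this step: $\log\psi^q(\A^n)$ is subadditive, not a Birkhoff sum of a continuous potential, so the classical Pesin--Weiss multifractal covering must be adapted to the subadditive thermodynamic formalism. The typical-cocycle hypothesis is what makes this possible, because quasi-multiplicativity underlies the existence of $P(\log\psi^q(\A))$ as an honest limit and supplies the uniform comparability of $\psi^q$ across the subshift that the covering estimate requires. Once both sandwich inequalities are in hand, Theorem \ref{main-thm} collapses them and the corollary follows.
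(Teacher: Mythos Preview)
The paper's own proof is a single sentence: it cites \cite[Theorem~A]{Moh22-entropy}, which already establishes
\[
h_{\mathrm{top}}(E(\vec\alpha))=\inf_{q\in\R^d}\bigl\{P(\log\psi^q(\A))-\langle q,\vec\alpha\rangle\bigr\}
\]
for typical cocycles, and then invokes Theorem~\ref{main-thm} for the remaining equality. So you are not just filling in details---you are reproving the cited external theorem, and your sketch does not quite succeed.

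The real gap is in your lower bound. The ergodic case is fine, but your reduction of the non-ergodic case is not: if $\mu\in\M(\Si,T)$ satisfies $\chi_i(\mu,\A)=\alpha_i$, its ergodic components $\nu$ satisfy $\int\chi_i(\nu,\A)\,d\tau(\nu)=\alpha_i$, \emph{not} $\chi_i(\nu,\A)=\alpha_i$ for $\tau$-a.e.\ $\nu$. There is no ``approximation by ergodic components lying in the same slice'' in general. Consequently your sandwich only yields
\[
\sup_{\substack{\mu\ \text{ergodic}\\ \chi(\mu)=\vec\alpha}} h_\mu(T)\ \leq\ h_{\mathrm{top}}(E(\vec\alpha))\ \leq\ \inf_{q}\bigl\{P(\log\psi^q(\A))-\langle q,\vec\alpha\rangle\bigr\}\ =\ \sup_{\substack{\mu\ \text{invariant}\\ \chi(\mu)=\vec\alpha}} h_\mu(T),
\]
and the leftmost and rightmost quantities are not known to coincide without further work (this is precisely the sort of statement that the construction underlying \cite[Theorem~A]{Moh22-entropy} supplies). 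Closing that gap requires producing, for each $\vec\alpha\in\mathrm{ri}(\Omega)$, ergodic measures with exponents $\vec\alpha$ and entropy arbitrarily close to the invariant supremum---a genuine construction, not a decomposition argument.

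A smaller but related issue: in your upper-bound paragraph you assert that $\log\psi^q(\A^n)$ is subadditive. It is not, unless $q_1\ge q_2\ge\cdots\ge q_d$; the paper stresses this explicitly just before Theorem~\ref{vair-prin-for the generalized sing}. Your subsequent appeal to quasi-multiplicativity is the right instinct, but the covering argument you describe then needs substantially more care than the classical Pesin--Weiss scheme, and that is exactly the content of the cited external theorem.
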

\begin{proof}
It follows from the combination \cite[Theorem A]{Moh22-entropy} and Theorem \ref{main-thm}.
\end{proof}
We remark that the above corollary extend previous results about the entropy spectrum of the top Lyapunov exponent \cite{feng09, DGR19, Moh22-Lyapunov} and the entropy spectrum of certain asymptotically additive potentials \cite{FH}. Moreover, our result gives an affirmative answer to \cite[Problem (7)]{BS21}.

\subsection{Acknowledgements.}
The author would like to thank Michal Rams for
helpful discussion and helping with Proposition \ref{relation between entropies and LE}. This work was supported by the Knut and Alice Wallenberg Foundation.

\section{preliminaries}\label{section-prelim}
Let $k \in \N$ be given.
The two-sided shift $\Si$ of $k$ symbols is a space $\{1,2,\ldots,k\}^\Z$ equipped with a norm $d$ that is, for all $x\neq y$, $d(x, y)=2^{-N(x,y)}$, where $$N(x, y)=\min\{n, x_{n}\neq y_{n}\}.$$ 
 
We denote by $\mathcal{L}$ the set of all words and for each $n\in\N$, we denote by $\mathcal{L}_n$ the set of all length $n$ words of $\Si$. If $x \in \Si$, then we define $\left.x\right|_{n}=x_{0} \cdots x_{n-1}$ for all $n \in \mathbb{N}$. The empty word $\left.i\right|_{0}$ is denoted by $\varnothing$. The length of $i \in \mathcal{L}$ is denoted by $|i|$. The longest common prefix of $i, j \in \mathcal{L} \cup \Si $ is denoted by $i \wedge j$. The concatenation of two words $i \in \mathcal{L} \cup \Si$ and $j \in \mathcal{L} $ is denoted by $i j$. Let $T$ be the left shift operator on $\Si$ to itself. If $i \in \mathcal{L}_{n}$ for some $n$, then we set $[i]=\left\{j \in \Si:\left.j\right|_{n}=i\right\}$. The set $[i]$ is called a \textit{cylinder set}.  Moreover, the cylinder sets are open and closed in this topology and they generate the Borel $\sigma$-algebra. The shift space $\Si$ is compact in the topology generated by the cylinder sets. We denote by $\M(\Si, T)$ the space of all $T$-invariant Borel probability measures
on $\Si$.

In the two-sided dynamics, we define the \textit{local stable set}
\[ W_{\loc}^{s}(x)=\{y \in \Si : x_{n}=y_{n} \hspace{0,2cm}\textrm{for all}\hspace{0.2cm} n\geq 0\} \]
and the \textit{local unstable set}
\[ W_{\loc}^{u}(x)=\{y \in \Si : x_{n}=y_{n} \hspace{0,2cm}\textrm{for all}\hspace{0.2cm} n \leq 0\} .\]

Furthermore, the global stable and unstable manifolds of $x \in \Si$ are
\[W^{s}(x):=\left\{y \in \Si: T^{n} y \in  W_{\loc}^{s}(T^{n}(x))\text { for some } n \geq 0\right\},\]
\[
W^{u}(x):=\left\{y \in \Si: T^{n} y \in W_{\loc}^{u}(T^{n}(x)) \text { for some } n \leq 0\right\}
.\]

\subsection{Typical cocycles}
Let $T: X \rightarrow X$ be a topological dynamical system and let $\mathcal{A}: X \rightarrow \glr$ be a continuous function. For $x \in X$ and $n \in \mathbb{N}$, the product of $\mathcal{A}$ along the orbit of $x$ for time $n$ is denoted by
$$
\mathcal{A}^{n}(x):=\mathcal{A}\left(T^{n-1} (x)\right) \ldots \mathcal{A}(x).
$$
The pair $(\mathcal{A}, T)$ is called a \textit{matrix cocycle}; when the context is clear, we say that $\mathcal{A}$ is a matrix cocycle. That induces a skew-product dynamics $F$ on $X\times \R^{k}$ by $(x, v)\mapsto X\times \R^{k}$, whose $n$-th iterate is therefore \[(x, v)\mapsto (T^{n}(x), \mathcal{A}^{n}(x)v).\]

A well-known example of matrix cocycles is \textit{one-step cocycles} which is defined as follows. Assume that $\Si=\{1,...,k\}^{\Z}$ is a symbolic space. Suppose that $T:\Si \rightarrow \Si$  is a shift map, i.e. $T(x_{l})_{l\in \Z}=(x_{l+1})_{l\in \Z}$. Given a $k$-tuple of matrices $\textbf{A}=(A_{1},\ldots,A_{k})\in \glr^{k}$ , we associate with it the locally constant map $\mathcal{A}:\Si \rightarrow \glr$ given by $\mathcal{A}(x)=A_{x_{0}},$ that means the matrix cocycle $\mathcal{A}$ depends only on the zero-th symbol $x_0$ of $(x_{l})_{l\in \Z}$. In this case, we say that $(\mathcal{A}, T)$ is a one-step cocycle; when the context is clear, we say that $\mathcal{A}$ is a one-step cocycle. The $k$-tuple of matrices $\textbf{A}$ is called the generator of the cocycle $\A$. For any length $n$ word $I=i_{0}, \ldots, i_{n-1},$  we denote 
\[\mathcal{A}_{I}:=A_{i_{n-1}}\ldots A_{i_{0}}.\]
Therefore, when $(\A, T)$ is a one-step cocycle, \[ \A^n(x)=\A_{x_{|n}}=A_{x_{n-1}}\ldots A_{x_{0}}.\]

\begin{defn}\label{holonomy}
A \textit{local stable holonomy} for the matrix cocycle $(\A, T)$ is a family of matrices $H_{y \leftarrow x}^{s} \in \glr$ defined for all $x\in \Si$ with $y\in W_{\loc}^{s}(x)$ such that
\begin{itemize}
\item[a)]$H_{x \leftarrow x}^{s}=Id$ and $H_{z \leftarrow y}^{s} \circ H_{y \leftarrow x}^{s}=H_{z \leftarrow x}^{s}$ for any $z,y \in W_{\loc}^{s}(x)$.
\item[b)] $\mathcal{A}(y)\circ H_{y \leftarrow x}^{s}=H_{T(y) \leftarrow T(x)}^{s}\circ \mathcal{A}(x).$
\item[c)] $(x, y, v)\mapsto H_{y\leftarrow x}(v)$ is continuous.
\end{itemize}
Moreover, if $y\in W_{\loc}^{u}(x)$, then similarly one defines $H_{y \leftarrow x}^{u}$ with analogous properties.
\end{defn}

According to $(b)$ in the above definition, one can extend the definition to the global stable holonomy $H_{y\leftarrow x}^{s}$ for $y\in W^{s}(x)$ not necessarily in $W_{\loc}^{s}(x)$ :
\begin{equation}\label{extension of holonomy}
H_{y\leftarrow x}^{s}=\mathcal{A}^{n}(y)^{-1} \circ H_{T^{n}(y)\leftarrow T^{n}(x)}^{s}\circ \mathcal{A}^{n}(x),
\end{equation} 
where 
$n\in \N$ is large enough such that $T^{n}(y)\in W_{\loc}^{s}(T^{n}(x))$. One can extend the definition of the global unstable holonomy similarly. Note that the canonical holonomies (see \cite{bonatti2004lyapunov}) always exist for one-step cocycles; see \cite[Remark 1]{Moh22-Lyapunov}.

 Suppose that $p\in \Si$ is a periodic point of $T$, we say $p\neq z\in \Si$ is a \textit{homoclinic point} associated to $p$ if it is the intersection of the stable and unstable manifold of p. That is, $z\in W^{s}(p) \cap W^{u}(p)$.  Then, we define the \textit{holonomy loop} \[W_{p}^{z}:=H_{p \leftarrow z}^s \circ H_{z \leftarrow p}^u. \]
 
Up to replacing $z$ by some backward iterate, we may suppose that $z\in W_{\text{loc}}^{u}(p)$ and $T^{n}(z)\in W_{\text{loc}}^{s}(p)$ for
some $n \geq 1$, which may be taken as a multiple of the period of $p$. Then, by the analogue of
\eqref{extension of holonomy} for stable holonomies,
\[ W_{p}^{z}=\mathcal{A}^{-n}(p)\circ H_{p \leftarrow T^{n}(z)}^{s} \circ \mathcal{A}^{n}(z) \circ H_{z \leftarrow p}^{u}.\]

\begin{defn}\label{typical1}
Suppose that $\mathcal{A}:\Si \rightarrow \glr$ is a one-step cocycle. We say that $\mathcal{A}$ is \textit{1-typical} if there exist a periodic point $p$ and a homoclinic point $z$ associated to $p$ such that:
\begin{itemize}
\item[(i)] The eigenvalues of  $\mathcal{A}^{per(p)}(p)$ have multiplicity $1$ and distinct absolute values;
\item[(ii)] Denoting by $\left\{v_{1}, \ldots, v_{d}\right\}$ the eigenvectors of $\mathcal{A}^{per(p)}(p)$, for any $I, J \subset \{1, \ldots, d\}$ with $|I|+$ $|J| \leq d$, the set of vectors
$$
\left\{W_{p}^{z} \left(v_{i}\right): i \in I\right\} \cup\left\{v_{j}: j \in J\right\}
$$
is linearly independent.
\end{itemize}

We say $\mathcal{A}$ is \textit{typical} if $\mathcal{A}^{\wedge t}$ is 1-typical with respect to the same typical pair $(p, z)$ for all $1 \leq t \leq d-1$.
\end{defn}
Bonatti and Viana \cite{bonatti2004lyapunov} showed that the set of typical cocycles is open and dense.

\subsection{Sub-additive thermodynamic formalism}

 Let $\Phi=\{\log \phi_n\}_{n \in \N}$ be a \textit{sub-additive potential} over a topological dynamical system $(X, T)$, i.e., each $\phi_{n}$ is a continuous positive-valued function on $X$ such that
\[ 0<\phi_{n+m}(x) \leq \phi_{n}(x) \phi_{m}(T^{n}(x)) \quad \forall x\in X, m,n \in \N.\]

Similarly, we call a sequence of continuous functions (potentials) $\Phi=\{\log \phi_n\}_{n \in \N}$
\textit{super-additive} if $-\Phi=\{-\log \phi_n\}_{n \in \N}$ is sub-additive.

Moreover,  $\Phi=\{\log\phi_{n}\}_{n=1}^{\infty}$ is said to be an \textit{almost additive potential} if there exists a constant $C > 0$ such that for any $m,n \in \N$, $x\in X$, we have
\[
C^{-1}\phi_{n}(x)\phi_{m}(T^{n})(x) \leq \phi_{n+m}(x)\leq C \phi_{n}(x) \phi_{m}(T^{n}(x)).
\]

 The sub-additive variational principle (see \cite{CFH08}) states that
\begin{equation}\label{varitional-principle}
P\left(\Phi\right)=\sup \bigg\{h_{\mu}(T)+\lim _{n \rightarrow \infty} \frac{1}{n} \int \log \phi_{n}(x) d \mu(x) :   \mu \in \mathcal{M}(X, T) \bigg\},
\end{equation}
where $P(\Phi)$ is the topological pressure of the sub-additive potential $\Phi$ and $h_{\mu}(T)$ is the \textit{measure-theoretic entropy}. Moreover, the super-additive variational principle was proved in \cite{CPZ}.

 The submultiplicativity of the norm $\|\cdot\|$ implies that  $\| \A \|$ is submultiplicative in the sense that for any $m, n \in \mathbb{N}$, and $x \in X$,
$$
0 \leq \|\mathcal{A}^{n+m}(x)\| \leq \|\mathcal{A}^{n}(T^{m}(x))\| \|\mathcal{A}^{n}(x)\|.
$$
Such submultiplicative sequence gives rise to a norm potential $\left\{\log \| \A^{n}\| \right\}_{n \in \mathbb{N}}$. Therefore, we can use the variational principle for topological pressure for the potential $\left\{\log \| \A^{n}\| \right\}_{n \in \mathbb{N}}.$

Assume that $(A_1, \ldots, A_k)\in \glr^k$ generates a one-step cocycle $\A:\Si \to \glr.$ We denote $\Phi_{\A^n}:=(\log \s_1(\A^n), \ldots, \log \s_d(\A^n))$ for any $n \in \N$. For any $n \in \N$ and $q \in \R^d$, $\langle q, \Phi_{\A^n} \rangle=\log \psi^{q}(\A^n).$ For any $q=(q_1, \ldots, q_d) \in \R^d$, we can write
\[ \psi^{q}(\A)=\prod_{i=1}^{d} \|\A^{\wedge i}\|^{t_i},\]
where $t_i=q_{i}-q_{i+1}$ and $q_{d+1}=0$ for $i \in \{1, \ldots, d\}$. Since $\|\A^{\wedge i}\|^{t_i}$ is either sub-multiplicative or super-multiplicative, $\lim_{n\to \infty}\frac{1}{n} \int \log \|\A^{\wedge i}_{x_{|n}}\|^{t_i} d\mu(x)$ exists. Thus, 
\[\lim_{n\to \infty}\frac{1}{n} \int \log \psi^{q}(\A_{x_{|n}}) d\mu(x)\]
exists.

Let $T:\Si \to \Si$ be a full shift. The measure-theoretic entropy of $T$ with respect to $\mu \in \M(\Si, T)$ exists, being defined by
\[
h_{\mu}(T):=\lim_{n \to \infty}\frac{-1}{n}\sum_{I \in \mathcal{L}_n} \mu([I])\log \mu([I]),
\]
where $0\log 0=0.$ The measure $\mu$ is also invariant under $T^{m}$ and we have $h_{\mu}(T^{m})=mh_{\mu}(T).$ For more information on measure-theoretic entropy, we refer the reader \cite{PU}.
\section{Proof of Theorem \ref{main-thm}}\label{proof-of-the-main-thm}

For any $q \in \R^d$,  $\psi^q(\mathcal{A})$ is neither super-multiplicative nor sub-multiplicative. For one-step cocycles, the limsup topological pressure of $\log \psi^q(\mathcal{A})$ can be defined by
\[ P^*(\log \psi^q(\mathcal{A})):=\limsup_{n \to \infty}\frac{1}{n} \log s_n(q),     \hspace{0.5cm} \forall q \in\R^d, \]
where $s_{n}(q):=\sum_{I \in \mathcal{L}_n} \psi^q(\mathcal{A}_{I})$.
When the limit exists, we denote the topological pressure by $P(\log \psi^q(\mathcal{A})).$ 
\begin{lem}\label{existence-of-top-pre}
Assume that $(A_1, \ldots, A_k) \in \glr^k$ generates a one-step cocycle $\A: \Si \to \glr.$ Let $\A:\Si \to \glr$ be a typical cocycle.  Then the limit in defining  $P^*(\log \psi^q(\mathcal{A}))$ exists for any $q\in \R^{d}$. Moreover, $P(\log \psi^q(\mathcal{A}))$ is a
convex function of $q$ in $\R^d$.
\end{lem}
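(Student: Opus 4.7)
The plan is to reduce existence of the limit defining $P(\log\psi^q(\mathcal{A}))$ to a quasi-multiplicativity estimate for the generalized singular value function, and then to read off convexity from the explicit formula for $s_n(q)$.

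The first step is to establish the following quasi-multiplicativity property of $\psi^q$ for typical cocycles: for every $q\in\R^d$ there exist constants $C_q>0$ and $k_0\in\N$ (with $k_0$ independent of $q$) such that for all words $I,J\in\mathcal{L}$ one can find $K\in\mathcal{L}$ with $|K|\le k_0$ satisfying
\[\psi^q(\mathcal{A}_{IKJ})\ge C_q\,\psi^q(\mathcal{A}_I)\,\psi^q(\mathcal{A}_J).\]
The input is the analogous statement for each norm $\|\mathcal{A}^{\wedge m}\|$, which for 1-typical cocycles is known (Bonatti--Viana \cite{bonatti2004lyapunov}; see also \cite{Moh22-Lyapunov}). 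The full typicality of $\mathcal{A}$ is precisely what ensures that $\mathcal{A}^{\wedge m}$ is 1-typical for every $m=1,\dots,d-1$ \emph{with a common periodic-homoclinic pair} $(p,z)$, so a single connector $K$ built from $p$ and $z$ works for every $m$ simultaneously. Writing $\psi^q(\mathcal{A})=\prod_{m=1}^{d}\|\mathcal{A}^{\wedge m}\|^{t_m}$ with $t_m=q_m-q_{m+1}$ and $q_{d+1}:=0$, I would raise the quasi-multiplicativity inequality to $t_m$ when $t_m\ge 0$; when $t_m<0$ the inequality reverses, so I would instead invoke sub-multiplicativity $\|\mathcal{A}^{\wedge m}_{IKJ}\|\le\|\mathcal{A}^{\wedge m}_I\|\,\|\mathcal{A}^{\wedge m}_K\|\,\|\mathcal{A}^{\wedge m}_J\|$ together with the uniform bound on $\|\mathcal{A}^{\wedge m}_K\|$ over $|K|\le k_0$. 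Multiplying over $m$ absorbs the $K$-dependent factors into a single constant $C_q$.

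Given this, I would pass to a super-multiplicative inequality for the partition sums. Because for fixed $n,m$ the triple $(I,J,|K(I,J)|)$ can be recovered uniquely from the concatenation $IK(I,J)J$, summing the quasi-multiplicativity estimate over $\mathcal{L}_n\times\mathcal{L}_m$ gives
\[C_q\,s_n(q)\,s_m(q)\le\sum_{j=0}^{k_0}s_{n+m+j}(q).\]
Straightforward upper and lower bounds on $\sigma_i(\mathcal{A}_I)$ via the generator norms $\max_i\|A_i\|$ and $\max_i\|A_i^{-1}\|$ yield $|\log s_n(q)|\le C'(q)\,n$, so a standard Fekete-type argument applied to $a_n:=\log s_n(q)$, starting from $a_n+a_m\le C''_q+\max_{0\le j\le k_0}a_{n+m+j}$, forces $a_n/n$ to converge. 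This establishes $P^{\ast}(\log\psi^q(\mathcal{A}))=P(\log\psi^q(\mathcal{A}))=\lim_{n\to\infty}\frac{1}{n}\log s_n(q)$ for every $q\in\R^d$.

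Convexity is then a soft consequence of the identity
\[\log s_n(q)=\log\sum_{I\in\mathcal{L}_n}\exp\bigl\langle q,\,(\log\sigma_1(\mathcal{A}_I),\dots,\log\sigma_d(\mathcal{A}_I))\bigr\rangle,\]
which is a log-sum-exp of affine functions of $q$ and hence convex for every $n$; the pressure $P(\log\psi^q(\mathcal{A}))$ is a pointwise limit of convex functions and is therefore convex on $\R^d$. The main obstacle I anticipate is the first step: assembling a single short connector $K$ that witnesses quasi-multiplicativity for every exterior power $\mathcal{A}^{\wedge m}$ simultaneously, and bookkeeping the resulting constant uniformly in $I,J$. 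This is precisely where the full typicality hypothesis (as opposed to mere 1-typicality) is essential; once it is in hand, the Fekete and convexity steps are routine.
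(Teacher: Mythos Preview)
Your proposal is correct and follows essentially the same route as the paper: the paper's proof simply cites \cite[Theorem 4.1]{Park20} (simultaneous quasi-multiplicativity of all exterior powers for typical cocycles, with a common bounded-length connector) together with \cite[Remark 3, Lemma 3.1]{Moh22-entropy} (the Fekete-type deduction of the pressure limit from quasi-multiplicativity of $\psi^q$), and calls the convexity ``standard'', whereas you have unpacked exactly these ingredients. One minor remark: the passage from ``common typical pair $(p,z)$'' to ``a single connector $K$ works for every $\mathcal{A}^{\wedge m}$'' is itself the nontrivial content of Park's theorem rather than an immediate consequence, so in writing this up you should cite that result rather than Bonatti--Viana directly; the rest of your bookkeeping (handling $t_m<0$ via submultiplicativity, the injectivity of $(I,J,|K|)\mapsto IKJ$, and the log-sum-exp convexity) is fine.
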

\begin{proof}
By combining \cite[Theorem 4.1]{Park20}, \cite[Remark 3]{Moh22-entropy} and \cite[Lemma 3.1]{Moh22-entropy}, we immediately obtain the existence of the limit. Moreover, the convexity of $P(\log \psi^q(\mathcal{A}))$ follows by a standard argument.
\end{proof}

\begin{thm}\label{upper-bound of var-pri}
Assume that $(A_1, \ldots, A_k) \in \glr^k$ generates a one-step cocycle $\A: \Si \to \glr.$ Let $\A:\Si \to \glr$ be a typical cocycle. Then,
\[
\sup\bigg\{ h_{\mu}(T)+\lim_{n\to \infty} \frac{1}{n} \int \log \psi^{q}(\A^n(x)) d\mu(x): \mu \in \M(\Si, T) \bigg\} \leq P(\log \psi^{q}(\mathcal{A}))
\]
for any $q \in \R^d.$
\end{thm}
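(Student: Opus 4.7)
The plan is to carry out the classical Jensen/Gibbs inequality argument that yields the easy half of the sub-additive variational principle, adapted to the one-step setting where the potential is locally constant on cylinders. The key observation is that since $\A$ is a one-step cocycle, $\A^n(x)=A_{x_{n-1}}\cdots A_{x_0}$ depends only on $x|_n$; in particular $\log\psi^q(\A^n(x))$ is constant on each cylinder $[I]$ with $I\in\mathcal{L}_n$, and hence, for any $\mu\in\M(\Si,T)$,
\[
\int \log\psi^q(\A^n(x))\,d\mu(x)\;=\;\sum_{I\in\mathcal{L}_n}\mu([I])\log\psi^q(\A_I).
\]

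Next I would apply the Gibbs (Shannon) inequality $\sum_i p_i\log(c_i/p_i)\le \log\sum_i c_i$ with $p_i=\mu([I])$ (a probability vector indexed by $I\in\mathcal{L}_n$) and $c_i=\psi^q(\A_I)>0$, under the convention $0\log 0=0$. This gives
\[
H_\mu(\mathcal{L}_n)\;+\;\sum_{I\in\mathcal{L}_n}\mu([I])\log\psi^q(\A_I)\;\le\;\log\sum_{I\in\mathcal{L}_n}\psi^q(\A_I)\;=\;\log s_n(q).
\]
Dividing by $n$ and passing to the limit, I invoke three convergence facts already available: first, $\tfrac{1}{n}H_\mu(\mathcal{L}_n)\to h_\mu(T)$, which is standard for the generating partition into $1$-cylinders on a full shift; second, $\tfrac{1}{n}\int\log\psi^q(\A^n)\,d\mu$ converges, since by the decomposition $\psi^q(\A)=\prod_{i=1}^{d}\|\A^{\wedge i}\|^{t_i}$ with $t_i=q_i-q_{i+1}$ (and $q_{d+1}=0$), the integral splits into a sum of sub-additive and super-additive pieces each of which has a limit as noted in the preliminaries; third, $\tfrac{1}{n}\log s_n(q)\to P(\log\psi^q(\A))$ by Lemma \ref{existence-of-top-pre}.

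Combining these three, I obtain
\[
h_\mu(T)+\lim_{n\to\infty}\tfrac{1}{n}\!\int\log\psi^q(\A^n(x))\,d\mu(x)\;\le\;P(\log\psi^q(\A)),
\]
and taking the supremum over $\mu\in\M(\Si,T)$ yields the theorem.

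\textbf{Main obstacle.} There is essentially no obstacle: this is the well-known easy direction of the variational principle, and the only nonstandard feature — that $\log\psi^q(\A^n)$ is neither sub- nor super-additive — is bypassed entirely, because Jensen's inequality is applied term by term for each fixed $n$ and requires no multiplicativity property. The two nontrivial analytical inputs, namely existence of the pressure limit (Lemma \ref{existence-of-top-pre}, which relies on typicality via Park's quasi-multiplicativity) and existence of the measure-theoretic asymptotic integral, have already been recorded, so the argument reduces to stringing these facts together with the Gibbs inequality.
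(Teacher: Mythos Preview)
Your proposal is correct and is essentially identical to the paper's own proof: both apply the Gibbs/Jensen inequality $\sum_I p_I(\log c_I - \log p_I)\le \log\sum_I c_I$ with $p_I=\mu([I])$ and $c_I=\psi^q(\A_I)$ (using that $\A^n(x)$ depends only on $x|_n$), divide by $n$, and pass to the limit using Lemma~\ref{existence-of-top-pre} and the existence of the measure-theoretic limit noted in the preliminaries. Your write-up is, if anything, slightly more explicit about why the integral limit exists.
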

\begin{proof}
 We recall the following inequality (see \cite{Bow}) that we use for the proof:   \[\sum_{i=1}^m p_i\left(c_i-\log p_i\right) \leq \log \sum_{i=1}^m e^{c_i},\] where $c_i \in \mathbb{R}, p_i \geq 0$ and $\sum_{i=1}^m p_i=1$.

Assume that $\mu \in \M(\Si, T).$ Therefore, $\sum_{I \in \mathcal{L}_n} \mu([I])=1$ . Thus, by using the above inequality,

\[ \frac{1}{n}\sum_{I \in \mathcal{L}_{n}} \mu([I]) \big(-\log \mu([I])+\log \psi^q(\A_I)) \leq \frac{1}{n} \log \sum_{I \in \mathcal{L}_n}  \psi^q(\A_I),\]
 for any $q \in \R^d.$

By Lemma \ref{existence-of-top-pre}, the limit in defining  $P(\log \psi^q(\mathcal{A}))$ exists for any $q\in \R^{d}$. Letting $n \to \infty$ gives
\[ h_{\mu}(T)+\lim_{n \to \infty} \int \frac{\log \psi^q(\A_{x_{|n}})}{n} d\mu(x) \leq P(\log \psi^q(\A)).\]

 Therefore,
\[
\sup\bigg\{ h_{\mu}(T)+\lim_{n\to \infty} \frac{1}{n} \int \log \psi^{q}(\A^n(x)) d\mu(x): \mu \in \M(\Si, T) \bigg\} \leq P(\log \psi^{q}(\mathcal{A})).
\]

\end{proof}
Let $\textbf{A}$ be a compact set in $\glr.$ We say that $\textbf{A}$ is \textit{dominated} of index  $i$ iff there exist $C>0$  and $0<\tau<1$ such that for any finite sequence $A_1, \ldots, A_N$ in $\textbf{A}$ we have
$$
\frac{\sigma_{i+1}\left(A_1 \cdots A_N\right)}{\sigma_i\left(A_1 \cdots A_N\right)}<C \tau^N.
$$
We say that $\textbf{A}$ is dominated iff it is dominated of index  $i$ for each $i\in\{1, \ldots, d-1\}.$ A one step cocycle $\A$ generated by $\textbf{A}$ is dominated if $\textbf{A}$ is dominated (see \cite{BG}).

 \begin{thm}[{\cite[Corollary 4.5]{Moh22-entropy}}] \label{dominated-one-step-cocycle}
Assume that $(A_1, \ldots, A_k) \in \glr^k$ generates a one-step cocycle $\A: \Si \to \glr.$  Suppose that $\mathcal{A}:\Si \to \glr$ is a typical cocycle. Then, there exists $K_0 \in \N$ such that for every $n\in \N$ and $I \in \mathcal{L}_{n}$ there exist $J_2=J_2(I)$ and $J_1=J_1(I)$ with $|J_{i}|\leq K_0$ for $i=1,2$ such that the tuple \[\left(\mathcal{A}_{\mathrm{k}}\right)_{\mathrm{k} \in \mathcal{L}_{\ell(I)}^{\mathcal{D}}}, \quad \text{where } \mathcal{L}_{\ell(I)}^{\mathcal{D}}:=\left\{J_1(I) IJ_2(I): I \in \mathcal{L}_{n}\right\},\]
is dominated.
\end{thm}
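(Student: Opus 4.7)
The plan is to show that, after pre- and post-composing each $\A_I$ with matrices coming from a fixed finite family of auxiliary words depending only on the typical pair $(p,z)$, every finite product obtained by concatenating such augmented words is forced to lie in a dominated regime. The strategy uses the periodic matrix $M:=\A^{\mathrm{per}(p)}(p)$ as a common reference frame: by Definition~\ref{typical1}(i) it has $d$ distinct eigenvalues $\lambda_1>\cdots>\lambda_d$ in absolute value with eigenvectors $v_1,\ldots,v_d$, giving flags $V_i:=\mathrm{span}(v_1,\ldots,v_i)$; condition (ii), applied to every exterior power $\A^{\wedge t}$ for $1\le t\le d-1$, guarantees that these flags together with their $W_p^z$-images are simultaneously in general position at every level.

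\emph{Finite connector family.} After replacing $z$ by a backward iterate, one may assume $z\in W^u_{\loc}(p)$ and $T^{N_0}z\in W^s_{\loc}(p)$, so $z$ is coded by fixed finite words. Concatenating large powers of the periodic word for $p$ with the homoclinic excursion through $z$ produces, for every flag pair $(F_\bullet,F_\bullet')$ in a suitable compact region of the full flag manifold, a word $w$ of bounded length such that $\A_w$ maps $F_\bullet$ into a prescribed neighbourhood of $F_\bullet'$. Compactness of the flag manifold together with continuity of the holonomies reduces this to a finite open covering, producing a uniform length bound $K_0/2$.

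\emph{Choice of $J_1(I),J_2(I)$ and domination.} For $I\in\mathcal{L}_n$, let $F^{\mathrm{in}}_\bullet(I)$ and $F^{\mathrm{out}}_\bullet(I)$ be the right and left singular flags of $\A_I$. Pick $J_2(I)$ from the finite connector family so that $\A_{J_2(I)}$ sends a small neighbourhood of $V_\bullet$ into $F^{\mathrm{in}}_\bullet(I)$, and pick $J_1(I)$ so that $\A_{J_1(I)}$ sends $F^{\mathrm{out}}_\bullet(I)$ into a small neighbourhood of $V_\bullet$; both have length at most $K_0$ independently of $n$. The augmented matrix $B_I:=\A_{J_1(I)IJ_2(I)}$ then preserves a fixed open cone field around $V_\bullet$ with uniform multiplicative gap bounded below by $c\,|\lambda_i/\lambda_{i+1}|>0$ at every level, so for any concatenation $B_{k_1}\cdots B_{k_N}$ the flag stays trapped in the cone and the gap estimates compound, yielding $\sigma_{i+1}(B_{k_1}\cdots B_{k_N})/\sigma_i(B_{k_1}\cdots B_{k_N})\le C\tau^N$ with $\tau<1$ and $C>0$ independent of $n$, $N$ and the choices $k_j$. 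This is exactly the definition of a dominated tuple.

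The main obstacle lies in the connector construction and the choice of $J_1,J_2$: aligning only the top singular direction would be an easy cone argument, but one must align every exterior power simultaneously, which is why Definition~\ref{typical1} is imposed on $\A^{\wedge t}$ for every $1\le t\le d-1$. The finite open cover of the flag manifold must be chosen so that a single connector word works across all levels at once, and this is achieved by an inductive argument on $t$ that combines the one-level alignments into a simultaneous general-position statement using the linear independence clause in Definition~\ref{typical1}(ii).
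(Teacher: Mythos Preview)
The paper does not prove this theorem: it is quoted verbatim from \cite[Corollary~4.5]{Moh22-entropy} and used as a black box, so there is no in-paper argument to compare against. Your outline is in the spirit of the original proof in that reference (which in turn builds on \cite{Park20}): build connector words out of high powers of the periodic block for $p$ together with the homoclinic excursion through $z$, use them to align the singular flags of $\A_I$ with the fixed eigenflag $V_\bullet$ of $M=\A^{\mathrm{per}(p)}(p)$, and then invoke a cone-field criterion for domination.

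That said, two steps in your sketch are stated more strongly than what you can actually prove, and these are precisely where the work lies. First, the claim that for \emph{every} flag pair $(F_\bullet,F_\bullet')$ there is a connector word mapping one into a neighbourhood of the other is not what the typical hypothesis gives you directly; the twisting condition in Definition~\ref{typical1}(ii) only places $W_p^z(V_\bullet)$ in general position with $V_\bullet$, and one must argue carefully (using pinching by powers of $M$ before and after the excursion) that this suffices to reach the particular targets $F^{\mathrm{in}}_\bullet(I)$ and $F^{\mathrm{out}}_\bullet(I)$. Flags not in general position with $V_\bullet$ must be handled separately, and this is where uniformity of $K_0$ in $n$ becomes delicate. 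Second, the assertion that each $B_I$ contracts the cone with gap ``bounded below by $c\,|\lambda_i/\lambda_{i+1}|$'' is not quite right: for short $I$ the block $\A_I$ may contribute no singular-value separation at all, so the entire gap must come from the powers of $M$ hidden inside $J_1(I),J_2(I)$, and the exponent of that power is bounded by $K_0$. You therefore need to choose $K_0$ large enough \emph{first} to guarantee a fixed contraction, and only then argue that the same $K_0$ suffices for the alignment step at every level $t$ of the exterior power; your inductive remark on $t$ gestures at this but does not supply the mechanism.
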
 

For simplicity, we denote by $\ell:=\ell(I)$ the length of each $I \in \mathcal{L}_{\ell(I)}^{\mathcal{D}}$, where $\ell \in [n, n+2K_0].$ We also denote $\mathcal{L}_{\ell}^{\mathcal{D}}:=\mathcal{L}_{\ell(I)}^{\mathcal{D}}.$ Let $\A: \Si \to \glr$ be a one-step cocycle.   Assume that $\A:\Si \to \glr$ is a typical cocycle. By Theorem \ref{dominated-one-step-cocycle}, the one-step cocycle $\mathcal{B}: (\mathcal{L}_{\ell}^{\mathcal{D}})^{\Z} \to \glr$ over a full shift $((\mathcal{L}_{\ell}^{\mathcal{D}})^{\Z}, f )$ defined by $\mathcal{B}(\omega):=\A_{J_1(I)IJ_2(I)}$, where $\mathcal{B}$ depends only on the zero-th symbol $J_1(I)IJ_2(I)$ of $\omega \in (\mathcal{L}_{\ell}^{\mathcal{D}})^{\Z}$, is dominated. It is easy to see that $(\mathcal{L}_{\ell}^{\mathcal{D}})^{\Z} \subset \Si$.

We define a pressure on the dominated subsystem $\mathcal{L}_{\ell}^{\mathcal{D}}$ by setting
\[P_{\ell, \mathcal{D}}(\log\varphi):=\lim _{k \rightarrow \infty} \frac{1}{k} \log \sum_{I_{1}, \ldots, I_{k} \in \mathcal{L}_{\ell}^{\mathcal{D}}}\varphi(I_1 \ldots I_k),\]
where $\varphi: \mathcal{L} \rightarrow \mathbb{R}_{\geq 0}$ is sub-multiplicative, i.e.,
$$
\varphi(\mathrm{I}) \varphi(\mathrm{J}) \geq \varphi(\mathrm{IJ}).
$$
for all $I, J \in \mathcal{L}$ with $IJ \in \mathcal{L}.$

Let $\A:\Si \to \glr$ be a one-step cocycle. Assume that $\mathcal{A}:\Si \to \glr$ is a typical cocycle. Then, we can construct a dominated cocycle $\mathcal{B}$ as we explained the above. We denote 
\[
\Psi(\mathcal{B})=\left(\log \sigma_{1}(\mathcal{B}), \ldots, \log \sigma_{d}(\mathcal{B})\right).
\]

By \cite[Proposition 5.8]{Moh22-Lyapunov} $\{\langle q, \Psi(\B^n) \rangle\}_{n \in \N}$ is almost additive for any  $q \in \R^d.$

\begin{prop}\label{relation between entropies and LE}
For any $\mu' \in \M((\L_{\ell(I)}^{\D})^{\Z}, f)$, there is $\mu \in \M(\Si, T)$ such that
\begin{equation}\label{entropies-relation}
 h_{\mu'}(f) \leq (n+2K_0)h_{\mu}(T)+\frac {n+2K_0}n \log (2K_0+1),
\end{equation}
and 

\begin{equation}\label{LE-relations}
\lim_{k \to \infty} \frac{1}{k} \int \langle q, \Psi(\B^k(x)) \rangle d\mu'(x) \leq (n+2K_{0})\lim_{k\to \infty} \frac{1}{k} \int \log \psi^{q}(\A^k(x)) d\mu(x).
\end{equation}
\end{prop}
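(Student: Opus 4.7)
The plan is to build $\mu$ from $\mu'$ by a tower (Abramov) construction over the block dynamics, and then deduce the two inequalities separately. Let $\pi:(\L_{\ell}^{\D})^{\Z}\to\Si$ be the concatenation map, placing position $0$ at the start of the $0$-th block $J_1(I_0)I_0J_2(I_0)$. Form the tower $\hat{\Si}=\{(\omega,i):0\le i<\ell(I_0(\omega))\}$ with shift $\hat{T}(\omega,i)=(\omega,i+1)$ when $i+1<\ell(I_0(\omega))$ and $\hat{T}(\omega,i)=(f\omega,0)$ otherwise, equipped with the $\hat{T}$-invariant probability $\hat\mu=\bar\ell^{-1}\sum_{i\ge 0}\mu'|_{\{\ell(I_0)>i\}}\otimes\delta_i$, where $\bar\ell:=\int\ell(I_0)\,d\mu'\in[n,n+2K_0]$. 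The projection $\hat\pi:\hat{\Si}\to\Si$, $(\omega,i)\mapsto T^i\pi(\omega)$, intertwines $\hat{T}$ and $T$, so I set $\mu:=\hat\pi_*\hat\mu$, which is $T$-invariant by construction.

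\textbf{Entropy bound.} For \eqref{entropies-relation} I would work directly with partition entropies, bypassing Abramov. Let $\xi$ be the partition of $(\L_{\ell}^{\D})^{\Z}$ by the $0$-th symbol and $\zeta$ the partition of $\Si$ by the $0$-th coordinate. Given $(I_0,\ldots,I_{k-1})\in(\L_{\ell}^{\D})^k$, its concatenation is a word of length $L_k=\sum_{j<k}\ell(I_j)\le k(n+2K_0)$, and the tuple is determined by the concatenation together with the length sequence $(\ell(I_0),\ldots,\ell(I_{k-1}))$, of which there are at most $(2K_0+1)^k$. Combining this multiplicity bound with the inequality $\mu'([I_0\cdots I_{k-1}])\le\bar\ell\,\mu([J_1(I_0)I_0J_2(I_0)\cdots J_1(I_{k-1})I_{k-1}J_2(I_{k-1})])$ (from $\mu=\hat\pi_*\hat\mu$) yields
\[
H_{\mu'}(\xi^k)\le H_\mu(\zeta^{k(n+2K_0)})+k\log(2K_0+1),
\]
and dividing by $k$ and letting $k\to\infty$ gives $h_{\mu'}(f)\le(n+2K_0)h_\mu(T)+\log(2K_0+1)$, which is stronger than \eqref{entropies-relation} since $(n+2K_0)/n\ge 1$.

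\textbf{Lyapunov bound.} For \eqref{LE-relations} I would use the matrix identity $\B^k(\omega)=\A^{L_k(\omega)}(\pi(\omega))$, which yields $\langle q,\Psi(\B^k(\omega))\rangle=\log\psi^q(\A^{L_k(\omega)}(\pi(\omega)))$ pointwise. Writing $\log\psi^q=\sum_{i=1}^d t_i\log\|\A^{\wedge i}\|$ with $t_i=q_i-q_{i+1}$ and using sub-/super-multiplicativity of each $\|\A^{\wedge i}\|$, I would cut the $L_k(\omega)$-step orbit under $T$ into consecutive segments of length $n+2K_0$ and express the resulting sums as $T$-orbital averages against $\mu$ via the tower identity $\int\varphi\,d\mu=\bar\ell^{-1}\int\sum_{i=0}^{\ell(I_0)-1}\varphi(T^i\pi(\omega))\,d\mu'(\omega)$. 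Since $\{\langle q,\Psi(\B^n)\rangle\}_{n\in\N}$ is almost additive by \cite[Proposition~5.8]{Moh22-Lyapunov}, the $k\to\infty$ limit on the left-hand side exists, and the comparison delivers \eqref{LE-relations}.

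\textbf{Main obstacle.} The delicate step is the entropy inequality. One must check that the concatenation together with the length sequence $(\ell(I_0),\ldots,\ell(I_{k-1}))$ uniquely recovers the tuple $(I_0,\ldots,I_{k-1})$—this depends on the specific padding construction $I\mapsto J_1(I)IJ_2(I)$ supplied by Theorem~\ref{dominated-one-step-cocycle}—and one must carefully relate the $\mu'$-masses of $(\L_{\ell}^{\D})^{\Z}$-cylinders to the $\mu$-masses of their $\Si$-images through the tower formula. The Lyapunov part is technically cleaner; the main care there is reconciling the variable block length $\ell(I)\in[n,n+2K_0]$ with the uniform factor $n+2K_0$ on the right-hand side, which is precisely what forces the tower-averaging weight $\bar\ell$ to enter the intermediate computation.
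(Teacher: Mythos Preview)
Your tower construction of $\mu=\hat\pi_*\hat\mu$ is precisely the measure the paper defines (there written as $\mu=Z^{-1}\sum_I\sum_{i<\ell(I)}T^i_*\pi_*(\mu'|_{C[I]})$, with $Z=\bar\ell$), and your entropy argument rests on the same decipherability count the paper uses: they invoke Abramov for the ideal (injective) case and then bound the defect by observing that a length-$N$ word admits at most $(2K_0+1)^{1+N/n}$ partitions into blocks from $\L_\ell^{\D}$, which is your $(2K_0+1)^k$ bound in different normalisation. One caution on your write-up: the inequality $\mu'([I_0\cdots I_{k-1}])\le\bar\ell\,\mu([w])$ gives a \emph{lower} bound on $-\log\mu'$, hence a lower bound on $H_{\mu'}(\xi^k)$, so it is not the ingredient for the desired upper bound; the multiplicity count alone (say via $H_{\mu'}(\xi^k)\le H_{\mu'}(\eta)+k\log(2K_0+1)$ with $\eta$ the partition by the concatenated word, then relating $H_{\mu'}(\eta)$ to $H_\mu(\zeta^{k(n+2K_0)})$ through the factor $\hat\pi$) is what carries the argument.

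For \eqref{LE-relations} the paper does \emph{not} go through sub-/super-multiplicativity of the factors $\|\A^{\wedge i}\|^{t_i}$; it simply cites Wojtkowsky's Abramov-type identity for Lyapunov exponents of induced cocycles, which in your tower language is the equality $\chi_i(\mu',\B)=\bar\ell\,\chi_i(\mu,\A)$ for every $i$. Your proposed route---cutting the length-$L_k(\omega)$ orbit into segments of fixed length $n+2K_0$ and invoking sub-/super-multiplicativity of each $\|\A^{\wedge i}\|^{t_i}$---has a genuine gap: the signs $t_i=q_i-q_{i+1}$ are arbitrary, so some factors give upper bounds and others lower bounds, and these cannot be combined into a one-sided estimate for $\psi^q$; moreover the natural blocks have variable lengths $\ell(I_j)\in[n,n+2K_0]$, so a fixed-length cut does not align with the tower structure that produces $\mu$. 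The tower identity $\int\varphi\,d\mu=\bar\ell^{-1}\int\sum_{i<\ell(I_0)}\varphi(T^i\pi\omega)\,d\mu'$ that you already recorded, applied to the Kingman limits of $m^{-1}\log\|(\A^m)^{\wedge i}\|$, \emph{is} the Wojtkowsky argument and yields the exact relation $\lim_k k^{-1}\int\langle q,\Psi(\B^k)\rangle\,d\mu'=\bar\ell\cdot\lim_k k^{-1}\int\log\psi^q(\A^k)\,d\mu$ directly, with no splitting needed.
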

\begin{proof}
This technical result could be seen as a slight generalization of \cite[Proposition 5.2]{DGR17}, with statement for general ergodic measures as opposed to the measure of maximal entropy.

There is the natural projection $\pi$ from $(\L_{\ell(I)}^{\D})^{\Z}$ to $\Si$ given by the substitution map $\pi$. In this projection  each symbol $I$ from $ \L_{\ell(I)}^{\D}$ is projected to a word of length $\ell(I)\in [n,n+2K_0]$. The image $\nu =\pi_*(\mu')$ is not a $T$-invariant measure on $\Si$, to make it invariant we need to write

\[
\mu = \frac 1Z \sum_{I\in \L_{\ell(I)}^{\D}}\sum_{i=0}^{\ell(I)-1} T^i_* ( \pi_*( \mu'(C[I])),
\]
where $C[I]$ is the cylinder in $ (\L_{\ell(I)}^{\D})^{\Z}$ consisting of sequences with first symbol $I$, and $Z \in [n,n+2K_0]$ is the normalizing constant. The measures $\mu$ thus obtained are supported in the space

\[
\Sigma' = \bigcup_{i=0}^{n+2K_0-1} T^i(\pi(  (\L_{\ell(I)}^{\D})^{\Z}).
\]

If this projection $\pi:( (\L_{\ell(I)}^{\D})^{\Z} ) \to \Sigma'$ were injective, or equivalently if the infinite sequences in $\Sigma'$ were uniquely decipherable as concatenations of words from $\L_{\ell(I)}^{\D}$, then  the Abramov formula \cite{Ab} would give us
\[
h_{\mu'}(f) =  h_{\mu}(T)  \cdot \int \ell(I) d\mu'(I) \leq (n+2K_0)h_{\mu}(T).
\]

However, those sequences are in general not uniquely decipherable, which leads to a (possible) drop of the measure-theoretic entropy. On the other hand, this  non-unique decipherability is not very strong. That is, any word $W\in \{1,\ldots,k\}^N$ of length $N\gg n$ can be presented as a concatenation of words from $ \L_{\ell(I)}^{\D}$ (with the first and last word possibly incomplete) in no more than $(2K_0+1)^{1+N/n}$ ways. Indeed, fixing the partition means fixing the partition points, which are in distance at least $n$ and at most $n+2K_0$ from each other. That is, at most $1+N/n$ times we have to make a decision, and this decision can be made in $2K_0+1$ ways, hence the formula.

Thus, the noninjectivity can decrease the measure-theoretic entropy with respect to $\mu$ by at most

\[
\lim_{N\to\infty} \frac 1N \log (2K_0+1)^{1+N/n} = \frac 1n \log (2K_0+1)
\]
(compared with the 'ideal' situation given by the Abramov formula \cite{Ab}), which ends the proof of \eqref{entropies-relation}.

 Wojtkowsky \cite[Lemma 2.2]{Woj} proved a formula which relates the Lyapunov exponent of an
induced system; the formula is analogues to the Abramov formula. Therefore, the proof of \eqref{LE-relations} follows from the Wojtkowsky formula.
\end{proof}

\begin{thm}[{\cite[Theorem 4.7]{Moh22-entropy}}]\label{continuity_potential}Assume that $(A_1, \ldots, A_k) \in \glr^k$ generates a one-step cocycle $\A: \Si \to \glr.$  Suppose that $\mathcal{A}:\Si \to \glr$ is a typical cocycle. Then, 
\[\lim _{\ell \rightarrow \infty} \frac{1}{\ell} P_{\ell, \mathcal{D}}(\langle q, \Psi(\mathcal{B}) \rangle )=P(\log \psi^{q}(\mathcal{A})),\]
uniformly for all $q$ on any compact subsets of $\R^d$.
\end{thm}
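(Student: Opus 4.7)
The plan is to compare the partition sums defining the two pressures term-by-term, using almost additivity of $\B$'s potential and the bounded length of the extensions $J_1, J_2$. After obtaining pointwise convergence in $q$, I upgrade to uniform convergence on compact sets via convexity.

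\textbf{Single-word comparison.} Fix a compact set $K \subset \R^d$. Writing $\psi^q(\A) = \prod_{i=1}^d \|\A^{\wedge i}\|^{q_i - q_{i+1}}$ (with $q_{d+1}=0$), for each $I' \in \L_n$ the extended word $I = J_1(I')\,I'\,J_2(I') \in \L_\ell^{\D}$ satisfies $D^{-1}\|\A^{\wedge i}_{I'}\| \leq \|\A^{\wedge i}_I\| \leq D\|\A^{\wedge i}_{I'}\|$, by sub-multiplicativity of the exterior norms together with uniform upper bounds on $\|\A^{\wedge i}_{J_j}^{\pm 1}\|$ (each being a product of at most $K_0$ invertible matrices drawn from a fixed finite set). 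Since $|q_i - q_{i+1}|$ is bounded on $K$, raising to these exponents yields $C_1^{-1}\psi^q(\A_{I'}) \leq \psi^q(\B_{I'}) \leq C_1\psi^q(\A_{I'})$ for some $C_1 = C_1(K, K_0)$. Summing over $I' \in \L_n$ gives $\sum_{I \in \L_\ell^{\D}} \psi^q(\B_I) \asymp s_n(q)$ within the factor $C_1$.

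\textbf{Almost additivity and pointwise convergence.} By \cite[Proposition~5.8]{Moh22-Lyapunov} the sequence $\{\langle q, \Psi(\B^k)\rangle\}_k$ is almost additive: there exists $C_2 = C_2(q) > 0$ (uniform for $q \in K$ upon tracing the dominance constants of $\B$ with the boundedness of $|q_i - q_{i+1}|$) such that $C_2^{-(k-1)}\prod_j \psi^q(\B_{I_j}) \leq \psi^q(\B_{I_1 \cdots I_k}) \leq C_2^{k-1}\prod_j \psi^q(\B_{I_j})$. Summing over $(I_1, \ldots, I_k) \in (\L_\ell^{\D})^k$, taking $\frac{1}{k}\log$, and letting $k \to \infty$ collapses the multi-sum to a geometric one, so
\[P_{\ell, \D}(\langle q, \Psi(\B)\rangle) = \log\sum_{I \in \L_\ell^{\D}} \psi^q(\B_I) + O(1) = \log s_n(q) + O(1),\]
with the $O(1)$ term uniform in $\ell$ and in $q \in K$. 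Dividing by $\ell$ and using $\ell - n \leq 2K_0$,
\[\frac{1}{\ell} P_{\ell, \D}(\langle q, \Psi(\B)\rangle) = \frac{n}{\ell} \cdot \frac{1}{n}\log s_n(q) + O(1/\ell) \;\longrightarrow\; P(\log \psi^q(\A))\]
as $\ell \to \infty$, invoking Lemma \ref{existence-of-top-pre} for the existence of the right-hand limit.

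\textbf{Uniform convergence and the main obstacle.} Each function $q \mapsto \frac{1}{\ell} P_{\ell, \D}(\langle q, \Psi(\B)\rangle)$ is convex and finite-valued on $\R^d$ (the finite sums $\log \sum \psi^q(\B_{I_1 \cdots I_k})$ are convex in $q$ by H\"older's inequality, and convexity passes to pointwise limits), and the limit $q \mapsto P(\log \psi^q(\A))$ is convex and finite by Lemma \ref{existence-of-top-pre}. A standard fact (cf.\ \cite[Theorem~10.8]{Rock}) then upgrades pointwise convergence of finite convex functions on $\R^d$ to uniform convergence on compact subsets, finishing the proof. The main technical obstacle is confirming that $C_2(q)$ stays uniformly bounded as $q$ varies over a compact set; this is where typicality is essential, ensuring that $\B$ has dominance constants independent of the particular extension scheme $(J_1, J_2)$ chosen.
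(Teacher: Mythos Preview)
The paper does not prove this theorem; it is quoted verbatim from \cite[Theorem 4.7]{Moh22-entropy} with no argument given, so there is no ``paper's own proof'' to compare against here.

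Your outline is structurally sound --- the single-word comparison is correct, and the convexity upgrade via \cite[Theorem 10.8]{Rock} is the right way to pass from pointwise to locally uniform convergence. However, the heart of the matter is the step where almost additivity gives
\[
\bigl| P_{\ell,\D}(\langle q,\Psi(\B)\rangle) - \log\sum_{I\in\L_\ell^{\D}}\psi^q(\B_I)\bigr| \leq \log C_2
\]
with $C_2$ controlled \emph{uniformly in $\ell$}, and you do not prove this. You flag it as ``the main technical obstacle'' and then assert that typicality makes the dominance constants independent of the extension scheme. But Theorem~\ref{dominated-one-step-cocycle} only says that for each $n$ the tuple $(\A_{\mathrm k})_{\mathrm k\in\L_\ell^{\D}}$ is dominated; the constants $(C,\tau)$ in the definition of domination, and hence the almost-additivity constant from \cite[Proposition 5.8]{Moh22-Lyapunov}, are a priori $n$-dependent. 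Without showing $\log C_2(\ell)=o(\ell)$ --- which in practice means going back into the cone-field construction behind Theorem~\ref{dominated-one-step-cocycle} and checking that the cones are fixed independently of $n$ --- your displayed $O(1)/\ell$ error term is unjustified and the pointwise convergence does not follow.

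A smaller point: your diagnosis of the obstacle is slightly off-target. You frame it as uniformity of $C_2(q)$ over $q\in K$, but that is exactly what Rockafellar's theorem buys you for free once pointwise convergence holds. The real difficulty is uniformity in $\ell$ for fixed $q$, which is logically prior.
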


For any $q=(q_1, \ldots, q_d) \in \R^d$, note that $\log \psi^{q_{1}, \ldots, q_{d}}(\mathcal{A})$ is neither sub-additive nor super-additive for a general matrix cocycle $\A$ when $(q_1, q_2,\ldots, q_d)$ is not a monotone sequence. Therefore, we can not use the variational principle \eqref{varitional-principle}.  We recall that $\psi^{q}(\mathcal{A})=\psi^{q_{1}, \ldots, q_{d}}(\mathcal{A}).$ Using the above theorem, we prove the variational principle \eqref{varitional-principle} for $\{\log \psi(\A^n)\}_{n\in \N}.$

\begin{thm}\label{vair-prin-for the generalized sing}
Assume that $(A_1, \ldots, A_k)\in \glr^k$ generates a one-step cocycle $\A: \Si \to \glr$. Let $\A:\Si \to \glr$ be a typical cocycle.  Then,  the variational principle \eqref{varitional-principle} holds for $\{\log \psi^q(\A^n)\}_{n\in \N}$ for any $q \in \R^d$; that is
\[ P(\log \psi^{q}(\A))=\sup\bigg\{ h_{\mu}(T)+\lim_{n\to \infty} \frac{1}{n} \int \log \psi^{q}(\A^n(x)) d\mu(x): \mu \in \M(\Si, T) \bigg\} \]
for any $q \in \R^d.$
\end{thm}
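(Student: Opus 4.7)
The plan is to prove the inequality $P(\log \psi^q(\A)) \leq \sup_\mu\{\cdots\}$, since the reverse direction is already Theorem \ref{upper-bound of var-pri}. The obstacle to a direct application of the sub-additive variational principle \eqref{varitional-principle} is that $\{\log \psi^q(\A^n)\}_{n\in\N}$ is in general neither sub- nor super-multiplicative. I will circumvent this by lifting the problem to the dominated subsystem $((\L_\ell^\D)^\Z, f)$ furnished by Theorem \ref{dominated-one-step-cocycle}, on which $\{\langle q, \Psi(\B^k)\rangle\}_{k\in\N}$ is almost additive, and then transferring the resulting measure back to $(\Si, T)$ through Proposition \ref{relation between entropies and LE}.

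Concretely, fix $q \in \R^d$ and $\epsilon > 0$. For each large $n$ I will apply the almost additive variational principle \eqref{varitional-principle} on the subshift $((\L_\ell^\D)^\Z, f)$ to produce a measure $\mu'_n \in \M((\L_\ell^\D)^\Z, f)$ satisfying
\[ P_{\ell, \D}(\langle q, \Psi(\B)\rangle) \leq h_{\mu'_n}(f) + \lim_{k \to \infty} \frac{1}{k}\int \langle q, \Psi(\B^k(x))\rangle d\mu'_n(x) + \epsilon, \]
then apply Proposition \ref{relation between entropies and LE} to obtain a companion $\mu_n \in \M(\Si, T)$ for which \eqref{entropies-relation} and \eqref{LE-relations} hold. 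Combining these inequalities and dividing through by $\ell \in [n, n+2K_0]$ yields
\[ \frac{1}{\ell}P_{\ell, \D}(\langle q, \Psi(\B)\rangle) \leq \frac{n+2K_0}{\ell}\left[h_{\mu_n}(T) + \lim_{k\to\infty}\frac{1}{k}\int \log \psi^q(\A^k(x)) d\mu_n(x)\right] + E_n, \]
where the error term $E_n = \frac{n+2K_0}{\ell n}\log(2K_0+1) + \frac{\epsilon}{\ell}$ vanishes as $n\to\infty$, while by Theorem \ref{continuity_potential} the left-hand side converges to $P(\log \psi^q(\A))$ and $\frac{n+2K_0}{\ell} \to 1$.

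The hard part will be passing to the limit on the right-hand side. I plan to exploit weak-$*$ compactness of $\M(\Si, T)$ to extract a convergent subsequence $\mu_{n_j} \to \mu^*$. The difficulty is ensuring that the functional $\mu \mapsto h_\mu(T) + \lim_k \frac{1}{k}\int \log \psi^q(\A^k) d\mu$ behaves upper-semicontinuously along this sequence. Upper semicontinuity of the entropy functional on the symbolic space $(\Si, T)$ is standard. For the integral part, note that it equals the linear combination $\sum_{i=1}^{d} q_i \chi_i(\mu, \A)$, and continuity of $\mu \mapsto \chi_i(\mu, \A)$ on $\M(\Si, T)$ for typical cocycles is a known consequence of their quasi-multiplicativity (the same property underpinning Theorem \ref{continuity_potential}). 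Combining these facts one obtains
\[ P(\log \psi^q(\A)) \leq h_{\mu^*}(T) + \lim_{k\to\infty}\frac{1}{k}\int \log \psi^q(\A^k(x)) d\mu^*(x) + \epsilon, \]
and letting $\epsilon \to 0$ completes the argument.
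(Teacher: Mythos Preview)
Your setup through the displayed inequality
\[
\frac{1}{\ell}P_{\ell,\D}(\langle q,\Psi(\B)\rangle)\;\le\;\frac{n+2K_0}{\ell}\left[h_{\mu_n}(T)+\lim_{k\to\infty}\frac1k\int\log\psi^q(\A^k)\,d\mu_n\right]+E_n
\]
is exactly the paper's argument. The divergence comes in what you call ``the hard part''. In fact there is no hard part: the bracketed quantity is by definition at most $\sup_{\mu\in\M(\Si,T)}\{h_\mu(T)+\lim_k\frac1k\int\log\psi^q(\A^k)\,d\mu\}$, so you can replace it by that supremum immediately and obtain
\[
\frac{1}{\ell}P_{\ell,\D}(\langle q,\Psi(\B)\rangle)\;\le\;\frac{n+2K_0}{\ell}\,\sup_\mu\{\cdots\}+E_n\;\le\;\frac{n+2K_0}{\ell}\,P(\log\psi^q(\A))+E_n,
\]
the second inequality being Theorem~\ref{upper-bound of var-pri}. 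Letting $\ell\to\infty$ squeezes the middle term to $P(\log\psi^q(\A))$ via Theorem~\ref{continuity_potential}, and you are done. This is precisely what the paper does; no weak-$*$ compactness, no semicontinuity, no limiting $\mu^*$ is needed.

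Your detour is not only unnecessary but also rests on a claim that is not clearly available. You assert that $\mu\mapsto\chi_i(\mu,\A)$ is continuous on $\M(\Si,T)$ for typical cocycles ``as a known consequence of quasi-multiplicativity''. Quasi-multiplicativity of $\|\A^{\wedge m}\|$ gives nice behaviour of the \emph{pressure} (this is what underlies Theorem~\ref{continuity_potential}), but it does not directly yield continuity of $\mu\mapsto\lim_n\frac1n\int\log\|\A^{\wedge m,n}\|\,d\mu$. Sub-additivity gives only upper semicontinuity of each partial sum $\chi_1+\cdots+\chi_m$, and once you take the linear combination $\sum_m(q_m-q_{m+1})(\chi_1+\cdots+\chi_m)$ with coefficients of both signs, even upper semicontinuity can fail. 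So as written your limiting step has a genuine gap --- one that evaporates the moment you bound by the supremum instead.
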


\begin{proof}

Since $\{\langle q, \Psi(\mathcal{B}^n) \rangle \}_{n \in \N}$ is almost additive for each $q \in \R^d$ (see  \cite[Proposition 5.8]{Moh22-Lyapunov}), we have the varitional principle for $\langle q, \Psi(\mathcal{B}^n) \rangle$, i.e.,
\begin{equation}\label{var-prin-for-almost addititve}
P_{\ell, \mathcal{D}}(\langle q, \Psi(\mathcal{B}) \rangle)=\sup\bigg\{ h_{\mu}(f)+\lim_{k \to \infty} \frac{1}{k} \int  \langle q, \Psi(\B^k(x)) \rangle d\mu: \mu \in \M((\L_{\ell}^{\D})^{\Z}, f)\bigg\}.
\end{equation}

Note that for any $\mu' \in \M((\L_{\ell}^{\D})^{\Z}, f)$, there is $\mu \in \M(\Si, T)$ such that  \begin{equation}\label{Lyapunov-exponent-relation}
\lim_{k \to \infty} \frac{1}{k} \int \langle q, \Psi(\B^k(x)) \rangle d\mu'(x) \leq (n+2K_{0})\lim_{k\to \infty} \frac{1}{k} \int \log \psi^{q}(\A^k(x)) d\mu(x) 
\end{equation}
and \begin{equation}\label{relation between entropies}
 h_{\mu'}(f) \leq (n+2K_0)h_{\mu}(T)+\frac {n+2K_0}n \log (2K_0+1),
\end{equation} 
by Proposition \ref{relation between entropies and LE}.

By \eqref{var-prin-for-almost addititve}, \eqref{Lyapunov-exponent-relation}, \eqref{relation between entropies} and Theorem \ref{upper-bound of var-pri},
$$\begin{aligned}
\frac{1}{\ell}P_{\ell, \mathcal{D}}(\langle q, \Psi(\B) \rangle) &=\frac{1}{\ell}\sup\bigg\{ h_{\mu}(f)+\lim_{k \to \infty} \frac{1}{k} \int \langle q, \Psi(\B^k(x)) \rangle d\mu: \mu \in \M((\mathcal{L}_{\ell}^{\D})^{\Z}, f)\bigg\}\\
&\leq \frac{n+2K_0}{\ell} \sup\bigg\{ h_{\mu}(T)+\lim_{k\to \infty} \frac{1}{k} \int \log \psi^{q}(\A^k(x)) d\mu(x): \mu \in \M(\Si, T) \bigg\}\\
&+\frac {n+2K_0}{\ell n} \log (2K_0+1)\\
&\leq \frac{n+2K_0}{\ell} P(\log \psi^{q}(\A))+\frac {n+2K_0}{\ell n} \log (2K_0+1).\\
\end{aligned}
$$

The statement follows from Theorem \ref{continuity_potential} by letting $\ell \to \infty.$

\end{proof}

As an application of Legendre transform, we have:
\begin{thm}\label{FH-cor}
Assume that $S$ is a non-empty, convex set in $\R^{d}$ and let $g:S\rightarrow \R$ be a concave function. Set
\[Z(x)=\sup\{g(a) + \langle a, x \rangle : a\in S \}, \hspace{0.3cm}x\in \R^{d}\]
and
\[G(a)=\inf\{Z(x) - \langle a, x \rangle : x\in \R^{d} \}, \hspace{0.3cm}a\in S.\]
Then $G(a)=g(a)$ for $a \in \text{ri}(S)$.
\end{thm}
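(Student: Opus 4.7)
The strategy would be to recognize this statement as the Fenchel--Moreau biconjugation theorem in disguise: if I set $f(a) := -g(a)$ for $a \in S$ and extend $f$ to $\R^d$ by $+\infty$ on the complement, then $Z(x)$ is precisely the Legendre--Fenchel conjugate $f^{\ast}(x)=\sup_{a}\{\langle a,x\rangle - f(a)\}$, and $-G(a)$ is the biconjugate $f^{\ast\ast}(a)$. Rather than invoke the general biconjugation theorem as a black box, I would give a short self-contained argument by proving the two inequalities $G(a)\geq g(a)$ and $G(a)\leq g(a)$ separately.

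The first inequality is immediate from the definitions and actually holds on all of $S$, not just its relative interior: for any fixed $a\in S$ and any $x\in\R^d$, the definition of the supremum yields $Z(x)\geq g(a)+\langle a,x\rangle$, hence $Z(x)-\langle a,x\rangle\geq g(a)$. Taking the infimum over $x\in\R^d$ produces $G(a)\geq g(a)$.

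The reverse inequality is where concavity of $g$ together with the relative interior assumption become essential. For $a_0\in\text{ri}(S)$ I would invoke the supporting hyperplane theorem for concave functions to produce a supergradient $y\in\R^d$ satisfying
\[ g(a) \leq g(a_0)+\langle y, a-a_0\rangle \quad \text{for every } a\in S. \]
Rearranging with the choice $x=-y$ gives $g(a)+\langle a,-y\rangle\leq g(a_0)+\langle a_0,-y\rangle$ for all $a\in S$, so taking the supremum over $a\in S$ yields $Z(-y)\leq g(a_0)+\langle a_0,-y\rangle$, that is, $Z(-y)-\langle a_0,-y\rangle\leq g(a_0)$. Since $G(a_0)$ is defined as the infimum of $Z(x)-\langle a_0,x\rangle$ over all $x\in\R^d$, evaluating at $x=-y$ gives $G(a_0)\leq g(a_0)$. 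Combining with the first inequality, I conclude $G(a_0)=g(a_0)$.

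The main obstacle, and the only nontrivial input, is the existence of the supergradient $y$. Concavity alone does not guarantee a supergradient at arbitrary points of $S$, which is precisely why the conclusion is restricted to $\text{ri}(S)$: on the boundary of the effective domain a concave function may fail to admit a supporting affine functional. This supergradient existence is a classical result from convex analysis (cf.\ Rockafellar, cited in the paper), and once it is in hand the remainder of the proof reduces to the single substitution above.
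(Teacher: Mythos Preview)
Your argument is correct. The only nontrivial ingredient is the existence of a supergradient of $g$ at points of $\text{ri}(S)$, which is exactly Rockafellar's theorem that a proper convex function has a nonempty subdifferential throughout the relative interior of its effective domain; once that is granted, the two inequalities you derive are clean and complete.

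As for the comparison: the paper does not give its own proof of this statement at all---it simply refers the reader to \cite[Corollary~2.5]{FH}. Your write-up therefore supplies strictly more than the paper does, namely a short self-contained proof via the supergradient/supporting-hyperplane argument rather than a black-box citation of the Fenchel--Moreau machinery. Both routes ultimately rest on the same convex-analytic fact (nonemptiness of the subdifferential on the relative interior), but your presentation makes that dependence explicit and avoids appealing to the general biconjugation theorem.
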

\begin{proof}
See, e.g., \cite[Corollary  2.5]{FH}.
\end{proof}

Now, we prove Theorem \ref{main-thm}:

\begin{proof}[Proof of Theorem \ref{main-thm}]
 $\Omega$ is non-empty and convex. We define $g:\Omega \to \R$ by
\[g(\vec{\alpha})=\sup\bigg\{h_{\mu}(T): \mu \in \mathcal{M}(\Si, T), \quad (\chi_{1}(\mu, \mathcal{A}), \ldots, \chi_{d}(\mu, \mathcal{A}))=\vec{\alpha}\bigg\}.\]

It is easy to see that $g$ is a real-valued concave function on $\Omega.$ We define
\[Z(x):=\sup\bigg\{g(\vec{\alpha})+ \langle \vec{\alpha}, x \rangle : \vec{\alpha}\in \Omega\bigg\}, \quad \forall x \in \R^{d}.\]

Let $S:\R^{d} \to \R \cup \{+\infty\}$ be the function which agrees with $-g$ on $\Omega$ but is $+\infty$ everywhere else. Then, $S$ is convex  and has $\Omega$ as its effective domain, i.e. $\Omega=\{x, S(x)< \infty \}.$ By the definition of $Z$, $Z$ is equal to the conjugate function of $S$, so $Z$ is a convex function on $\R^{d}$ by the Legendre transform property; see \cite[Subsection 3.2]{Moh22-Lyapunov}.

We have
\[g(\vec{\alpha})=\inf_{x\in \R^{d}}\bigg\{Z(x)- \langle \vec{\alpha}, x \rangle  \bigg\},\] 
for all $\vec{\alpha}\in \text{ri}(\Omega)$, by Theorem \ref{FH-cor}.

By Lemma \ref{existence-of-top-pre},  $P(\log \psi^q(\A))$ is a convex function of $q$ in $\R^{d}$. Then, by Theorem \ref{vair-prin-for the generalized sing}  (variational principle),
\[Z(q)=P(\log \psi^{q}(\A)) \hspace{0.3cm} \forall q\in \R^d .\]
 Thus,
\[\sup\bigg\{h_{\mu}(T) : \mu \in \mathcal{M}(\Si, T), \chi_{i}(\mu, \mathcal{A})=\alpha_{i} \text{ for }i=1,2, \ldots, d \bigg\}=\inf_{q\in \R^d} \bigg\{P(\log \psi^{q}(\A))-\langle q, \vec{\alpha} \rangle \bigg\},\]
for $\vec{\alpha} \in \text{ri}(\Omega).$

\end{proof}

\bibliographystyle{amsalpha}
\bibliography{variational-typical}

\providecommand{\bysame}{\leavevmode\hbox to3em{\hrulefill}\thinspace}
\providecommand{\MR}{\relax\ifhmode\unskip\space\fi MR }
\providecommand{\MRhref}[2]{%
  \href{http://www.ams.org/mathscinet-getitem?mr=#1}{#2}
}
\providecommand{\href}[2]{#2}
\begin{thebibliography}{Moh22b}

\bibitem[Abr59]{Ab}
L.~M. Abramov, \emph{On the entropy of a flow}, Dokl. Akad. Nauk SSSR
  \textbf{128} (1959), 873--875.

\bibitem[BG09]{BG}
J.~Bochi and N.~Gourmelon, \emph{Some characterizations of domination},
  Mathematische Zeitschrift \textbf{263} (2009), no.~1, 221--231.

\bibitem[Bow75]{Bow}
R.~Bowen, \emph{{\normalfont Equilibrium States and the Ergodic Theory of
  Anosov Diffeomorphisms. {L}ecture {N}otes in {M}athematics}}, Springer,
  Berlin \textbf{470} (1975).

\bibitem[BS21]{BS21}
E.~Breuillard and C.~Sert, \emph{The joint spectrum}, Journal of the London
  Mathematical Society \textbf{103} (2021), no.~2, 943--990.

\bibitem[BV04]{bonatti2004lyapunov}
C.~Bonatti and M.~Viana, \emph{Lyapunov exponents with multiplicity 1 for
  deterministic products of matrices}, Ergodic Theory and Dynamical Systems
  \textbf{24} (2004), no.~5, 1295--1330.

\bibitem[CFH08]{CFH08}
Y.~Cao, D.~Feng, and W.~Huang, \emph{The thermodynamic formalism for
  sub-additive potentials}, Discrete and Continuous Dynamical Systems
  \textbf{20} (2008), no.~3, 639--657.

\bibitem[CPZ19]{CPZ}
Y.~Cao, Y.~Pesin, and Y.~Zhao, \emph{Dimension estimates for non-conformal
  repellers and continuity of sub-additive topological pressure}, Geometric and
  Functional Analysis \textbf{29} (2019), no.~5, 1325--1368.

\bibitem[DGR17]{DGR17}
L.~Díaz, K.~Gelfert, and M.~Rams, \emph{Nonhyperbolic step skew-products:
  {E}rgodic approximation}, Annales de l'Institut Henri Poincaré C \textbf{34}
  (2017), no.~6, 1561--159.

\bibitem[DGR19]{DGR19}
\bysame, \emph{Entropy spectrum of {L}yapunov exponents for nonhyperbolic step
  skew-products and elliptic cocycles}, Communications in Mathematical Physics
  \textbf{367} (2019), no.~2, 351--416.

\bibitem[Fen09]{feng09}
D.~Feng, \emph{Lyapunov exponents for products of matrices and multifractal
  analysis. part ii: General matrices.}, Israel Journal of Mathematics
  \textbf{170} (2009), 355--394.

\bibitem[FH10]{FH}
D.~Feng and W.~Huang, \emph{Lyapunov spectrum of asymptotically sub-additive
  potentials}, Communications in Mathematical Physics \textbf{297} (2010),
  no.~1, 1--43.

\bibitem[Moh22a]{Moh22-entropy}
R.~Mohammadpour, \emph{Entropy spectrum of {L}yapunov exponents for typical
  cocycles}, \url{https://arxiv.org/abs/2210.11574} (2022).

\bibitem[Moh22b]{Moh22-Lyapunov}
\bysame, \emph{Lyapunov spectrum properties and continuity of the lower joint
  spectral radius}, Journal of Statistical Physics \textbf{187} (2022), no.~3,
  23.

\bibitem[Par20]{Park20}
K.~Park, \emph{Quasi-multiplicativity of typical cocycles}, Communications in
  Mathematical Physics \textbf{376} (2020), no.~3, 1957--2004.

\bibitem[PU10]{PU}
F.~Przytycki and M.~Urbanski, \emph{{\normalfont Conformal {F}ractals:
  {E}rgodic {T}heory {M}ethods}}, London Mathematical Society Lecture Note
  Series, Cambridge University Press, Cambridge \textbf{371} (2010).

\bibitem[Roc70]{Rock}
R.~T. Rockafellar, \emph{{\normalfont Convex analysis}}, Princeton University
  Press, Princeton, N.J. (1970).

\bibitem[Woj85]{Woj}
M.~Wojtkowski, \emph{Invariant families of cones and {L}yapunov exponents},
  Ergodic Theory and Dynamical Systems \textbf{5} (1985), no.~1, 145--161.

\end{thebibliography}
\end{document}